\documentclass[11pt,a4paper]{article}
\usepackage{amsmath,amssymb,latexsym,amsthm,relsize}
\usepackage[english]{babel}
\usepackage[latin2]{inputenc}
\oddsidemargin .2pt
\evensidemargin .2in
\marginparwidth 1in
\marginparsep 7pt
\topmargin .0in
\headheight 12pt
\headsep .275in
\footskip 30pt
\textheight 21.3cm
\textwidth 15.5cm
\newtheorem{theorem}{Theorem}
\newtheorem{lemma}[theorem]{Lemma}
\newtheorem{proposition}[theorem]{Proposition}
\newtheorem{corollary}[theorem]{Corollary} 

\theoremstyle{definition}
\newtheorem{definition}{Definition}

\newtheorem{construction}{Construction}
\theoremstyle{remark}
\newtheorem{remark}{Remark}

\begin{document} 
	
	\title{Inverse property of non-associative abelian extensions}
	\author{\'Agota Figula (Debrecen) and P\'eter T. Nagy (Budapest)}
	\date{}
	\maketitle 
	\footnotetext{2010 {\em Mathematics Subject Classification: 20N05}.}
	\footnotetext{{\em Key words and phrases:} Loops, non-associative extensions of abelian groups, linear abelian extensions, left-, right- and inverse property.} 
\maketitle

\begin{abstract}
Our paper deals with the investigation of extensions of commutative groups by loops so that the quasigroups that result in the multiplication between cosets of the kernel subgroup are T-quasigroups. We limit our study to extensions in which the quasigroups determining the multiplication are linear functions without constant term, called linear abelian extensions. 
	We characterize constructively such extensions with left-, right-, or inverse properties using a general construction according to an equivariant group action principle. We show that the obtained constructions can be simplified for ordered loops. Finally, we apply our characterization to determine the possible cardinalities of the component loop of finite linear abelian extensions.
\end{abstract}

\section{Introduction} 

 A. A. Albert and R. H. Bruck proved in 1944 (cf. \cite{Alb}, \cite{Bru}) that the construction of loop extensions of a loop $N$ by a loop $K$ has a large degree of freedom, namely the multiplication between different 
cosets $\neq N$ of $N$ can be given by arbitrary quasigroup multiplication and for multiplication of $N$ with a coset of $N$ or a coset of $N$ with $N$ one has to choose quasigroups with left, respectively right unit elements.\\	
In the following, we want to study extensions of a commutative group $A$ by a loop $L$, so  that the quasigroups, which determine the multiplications betwen cosets of $A$, are so-called {\it T-quasigroups}. The theory of T-quasigroups was created almost 50 years ago by T. Kepka and P. N\v{e}mec (cf. \cite{KeNe1} and \cite{KeNe2}). The multiplication of a T-quasigroup $Q$  over an abelian group $A$ has the form $x\cdot y = \phi(x) + \psi(y) + g$, $x, y\in Q$,  where "$+$" is the  addition of $A$, $\phi$, $\psi$ are automorphisms of $A$ and $g$ is a constant in $A$. A general theory of natural generalizations of T-quasigroups has been extensively developed (e.g. \cite{StVo1}, \cite{Shc}, Chapter 2.10).  \\
The present work deals with the investigation of extensions of commutative groups by loops so that the quasigroups that result in the multiplication between cosets of the kernel subgroup are T-quasigroups. These loop extensions are introduced and examined by D. Stanovsk\'y,\, P. Vojt\v{e}chovsk\'y in \cite{StVo2} under the name {\it abelian extensions}. We limit our study to extensions in which the quasigroups determining the multiplication are linear functions $x\cdot y = \phi(x) + \psi(y)$, $x, y\in Q$, $\phi, \psi \in\hbox{Aut}(A)$ without constant term.\\
We characterize constructively such linear abelian extensions with left inverse, right inverse, or inverse properties using a general construction according to an equivariant group action principle. This type of construction of quasigroups and loops was originally proposed in \cite{SuKr} and applied in \cite{NS} to describe Schreier-type loop extensions with special properties.\\
After an introduction and presentation of the necessary concepts, we examine in \S 3 the property of equality of left and right inverses in the extension.
 \S 4 is devoted to the discussion of the left and right inverse property of linear abelian extensions. In \S 5 we assume that the quasigroups giving the multiplication between the kernel subgroup and its cosets are identical with the kernel subgroup. In this case we prove that an equivariant action of the symmetry group $S_3$ on $L\times L$, respectively on $\hbox{Aut}(A)\times\hbox{Aut}(A)$ is a constructive characterization of the inverse property. We show that the obtained constructions can be simplified if $L$ is an ordered loop. Finally, we apply our characterization to determine the possible cardinalities of the component loops $L$ of finite linear abelian extension loops.

\section{Preliminaries} 

A {\it quasigroup} $L$ is a set with a multiplication map $(x,y)\mapsto x\cdot y:L\times L\to L$ such that for each $x\in L$ the  {\it left translations}  $\lambda_x :L\to L$,  $\lambda_xy = xy$,  and the {\it right translations}  $\rho_x : L\to L$, $\rho_xy =  yx$, are bijective maps. The left and right division operations on $L$ are defined by the maps $(x,y)\mapsto x\backslash y = \lambda_x^{-1}y$, respectively  $(x,y)\mapsto x/y = \rho_y^{-1}x$,  
$x,y\in L$. An element $e\in L$ is called left (right) identity if it satisfies $e\cdot x = x$ ($x\cdot e = x$) for any $x\in L$. A left and right identity is called identity element. A quasigroup $L$ is a {\it loop} if it has an identity element. The automorphism group of $L$ is denoted by $\mathrm{Aut}(L)$. The multiplication $x\star y =  y\cdot x$ on a loop $L$ with multiplication $x\cdot y$ defines the {\it opposite loop} of $L$.\\
We will reduce the use of parentheses by the following convention: juxtaposition will denote multiplication, the division operations are less binding than juxtaposition, and the multiplication symbol is less binding than the divisions. For instance the expression $xy / u \cdot v \backslash w$ is
a short form of $((x\cdot y)/ u)\cdot(v\backslash w)$.\\ 
The {\it left inverse}, respectively the {\it right inverse} of an element $x$ of a loop $L$ is $e/x$, respectively 
$x \backslash e$ since $e/x\cdot x=e$, respectively $x\cdot x \backslash e=e$ holds. If the left and right inverses of $x\in L$ coincide then $x$ has {\it two-sided inverse} denoted by $x^{-1} = e/x = x \backslash e$. \\
A loop $L$ satisfies the {\it left}, respectively the {\it right inverse property} 
if there exists a bijection $\iota:L\to L$, such that $\iota(x)\cdot xy = y$, respectively 
$yx\cdot \iota(x)= y$ holds for all $x, y\in L$.  It is well known that in loops with left or right inverse property the left and right inverses of any element coincide, (cf. 
\cite{Pfl}, I.4.2 Theorem), hence $\iota(x) = x^{-1}$. A loop with {\it left} and {\it right inverse property} has {\it  inverse property}.  \\
A subloop $N\subset L$ is {\it normal} if it is the kernel of a homomorphism of $L$. The {\it factor loop} $L/N$ is the loop induced on the set of left cosets of the normal subloop $N$. A loop $L$ is an extension of a loop $N$ by a loop $K$ if $N$ is a normal subloop of $L$, called the {\it kernel} of the extension, and $K$ is isomorphic to the factor loop $L/N$. \\
An {\it ordered loop} $L$ is a loop together with an order $\le $ on $L$ satisfying the monotonic laws: if $x < y$, then $xz < yz$ and $zx < zy$ for any $x,y,z\in L$, where $x < y$ means $x \le y$ and $x \neq y$. An element $x\in L$ said to be {\it positive} if $e< x$ and {\it negative}, if $x< e$. The monotonic law implies that if $x \in  L$ is positive then $x\backslash e$ and $e/x$ are negative and conversely (cf. \cite{KaPC}). 
\subsection*{Linear abelian extensions} 
Let $A = (A,+)$ be a commutative group and $L = (L,\cdot, /, \backslash )$ a loop with identity element $\epsilon\in L$. A pair $(P, Q)$ is called a \emph{loop cocycle} if $P, Q$ are mappings $L\times L\to\hbox{Aut}(A)$
satisfying  $P(\alpha,\epsilon)=\hbox{Id}=Q(\epsilon,\beta)$ for every $\alpha,\beta \in L$. 
\begin{definition} The \emph{linear abelian extension $F(P,Q)$ of the group $A$ by the loop $L$ determined by the cocycle $(P, Q)$} is defined by the multiplication 
\begin{equation} \label{ext2} (\alpha,a) \cdot (\beta,b)=(\alpha\beta,P(\alpha,\beta) a + Q(\alpha,\beta) b) \end{equation} 
on $L \times A$.  \end{definition}
Clearly, $F(P,Q)$ is a loop with identity $(\epsilon,0)$. We have 
\begin{lemma}
$F(P,Q)$ is commutative if and only if $L$ is commutative and the cocycle $(P, Q)$ satisfies $P(\alpha,\beta)=Q(\beta,\alpha)$ for all $\alpha,\beta \in L$. 
\end{lemma}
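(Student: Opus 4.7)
The plan is to work directly from the defining multiplication \eqref{ext2}, comparing the two products $(\alpha,a)\cdot(\beta,b)$ and $(\beta,b)\cdot(\alpha,a)$ componentwise and then using the freedom to choose $a,b\in A$ independently to separate the conditions on $L$ from the conditions on the cocycle $(P,Q)$.

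First I would expand both sides. By \eqref{ext2} we have
$$(\alpha,a)\cdot(\beta,b)=(\alpha\beta,\,P(\alpha,\beta)a+Q(\alpha,\beta)b),$$
$$(\beta,b)\cdot(\alpha,a)=(\beta\alpha,\,P(\beta,\alpha)b+Q(\beta,\alpha)a).$$
Commutativity of $F(P,Q)$ says these two pairs agree for all $\alpha,\beta\in L$ and $a,b\in A$. Projecting onto the first coordinate gives $\alpha\beta=\beta\alpha$ for all $\alpha,\beta$, i.e. $L$ is commutative. This handles the ``loop part'' of the equivalence.

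Next I would extract the cocycle condition from the second coordinate. Assuming $L$ is commutative, commutativity of $F(P,Q)$ is equivalent to
$$P(\alpha,\beta)a+Q(\alpha,\beta)b=Q(\beta,\alpha)a+P(\beta,\alpha)b\quad\text{for all }a,b\in A.$$
Setting $b=0$ and letting $a$ range over $A$ forces $P(\alpha,\beta)=Q(\beta,\alpha)$ as maps $A\to A$; setting $a=0$ gives $Q(\alpha,\beta)=P(\beta,\alpha)$, but this is the same relation with $\alpha,\beta$ interchanged, so only one identity is required. Conversely, if $P(\alpha,\beta)=Q(\beta,\alpha)$ holds for all $\alpha,\beta$, substituting into the expanded products shows the two coincide, so $F(P,Q)$ is commutative.

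There is no real obstacle here; the argument is essentially bookkeeping in the direct-product set $L\times A$, made clean by the fact that $A$ has enough elements (in particular $0$) to separate the two summands. The only minor point to flag is that one must use both ``test choices'' $a=0$ and $b=0$ (or note that the resulting two identities are equivalent under swapping $\alpha$ and $\beta$) to deduce the single compact relation $P(\alpha,\beta)=Q(\beta,\alpha)$ stated in the lemma.
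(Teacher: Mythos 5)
Your argument is correct and is exactly the direct componentwise verification that the paper leaves implicit (the lemma is stated there without proof): compare first coordinates to get commutativity of $L$, then specialize $a=0$ and $b=0$ in the second coordinate to extract $P(\alpha,\beta)=Q(\beta,\alpha)$, noting the two resulting identities coincide under swapping $\alpha$ and $\beta$. Nothing is missing.
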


\section{Coincidence of the left and right inverses} 

Let $F(P,Q)$ be a linear abelian extension of the group $A$ by a loop $L$ not necessarily having two-sided inverses, we denote by $\epsilon/\xi$ the left inverse and by $\xi\backslash\epsilon$  the right inverse of $\xi\in L$.

\begin{lemma}\label{leftandrightinverses}
	The left, respectively the right inverses of an element $(\xi,x)$ of $F(P,Q)$ have the expressions 
	\begin{equation} \label{lri} \begin{split} (\epsilon,0)/(\xi,x) = (\epsilon/\xi,-P(\epsilon/\xi,\xi)^{-1} Q(\epsilon/\xi,\xi) x),\\ (\xi,x)\backslash(\epsilon,0) = (\xi\backslash\epsilon,-Q(\xi,\xi\backslash\epsilon)^{-1} P(\xi,\xi\backslash\epsilon)x).\end{split}\end{equation} 
\end{lemma}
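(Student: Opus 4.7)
The plan is a direct unpacking of the definitions: compute the unique solutions of the equations $(\eta,y)\cdot(\xi,x)=(\epsilon,0)$ and $(\xi,x)\cdot(\eta,y)=(\epsilon,0)$ using the multiplication rule \eqref{ext2}, and read off the $L$-component and $A$-component separately.

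For the left inverse, I would set $(\eta,y)\cdot(\xi,x)=(\epsilon,0)$ and apply \eqref{ext2} to obtain
\[
(\eta\xi,\; P(\eta,\xi)y + Q(\eta,\xi)x) = (\epsilon,0).
\]
The first coordinate gives $\eta\xi=\epsilon$, hence $\eta=\epsilon/\xi$ by the definition of the right division in $L$. Substituting this into the second coordinate yields the linear equation $P(\epsilon/\xi,\xi)y = -Q(\epsilon/\xi,\xi)x$ in $A$. Since $P(\epsilon/\xi,\xi)\in\hbox{Aut}(A)$ is invertible, one obtains $y=-P(\epsilon/\xi,\xi)^{-1}Q(\epsilon/\xi,\xi)x$, which is exactly the first formula of \eqref{lri}.

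The right-inverse formula follows symmetrically: setting $(\xi,x)\cdot(\eta,y)=(\epsilon,0)$ in \eqref{ext2}, the first coordinate gives $\xi\eta=\epsilon$, so $\eta=\xi\backslash\epsilon$, and the second coordinate becomes $Q(\xi,\xi\backslash\epsilon)y = -P(\xi,\xi\backslash\epsilon)x$, from which invertibility of $Q(\xi,\xi\backslash\epsilon)$ yields the second line of \eqref{lri}. There is no genuine obstacle here; the only thing to notice is that the cocycle values lie in $\hbox{Aut}(A)$ and hence the inversions $P(\cdot,\cdot)^{-1}$, $Q(\cdot,\cdot)^{-1}$ are legitimate, so that the solutions are in fact uniquely determined and the computed pair is indeed the left (resp.\ right) inverse.
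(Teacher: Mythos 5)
Your proposal is correct and follows exactly the same route as the paper: both set $(\eta,y)\cdot(\xi,x)=(\epsilon,0)$ (resp.\ $(\xi,x)\cdot(\eta,y)=(\epsilon,0)$), expand via \eqref{ext2}, and solve the two coordinates, using that the cocycle values are automorphisms. You merely spell out the intermediate steps that the paper's one-line proof leaves implicit.
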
 
\begin{proof} 
If $(\epsilon,0)/(\xi,x) = (\eta,y)$ then $(\eta,y)(\xi,x)=(\eta\xi,P(\eta,\xi)y+Q(\eta,\xi)x)= (\epsilon,0)$. Similarly, if $(\xi,x)\backslash(\epsilon,0) = (\eta,y)$ then 
$(\xi,x)(\eta,y)=(\xi\eta,P(\xi,\eta)x+Q(\xi,\eta)y)=(\epsilon,0)$. Expressing $(\eta,y)$ from these equations we get the assertion.
\end{proof}
\begin{proposition}
The left and right inverses of any element of $F(P,Q)$ coincide if and only if $L$ has this property and 
	\begin{equation} \label{cip} p(\xi^{-1}) = q(\xi^{-1})p(\xi)^{-1}q(\xi) \quad \hbox{holds for all} \quad \xi\in L,  
\end{equation}
where $p: L\to\hbox{Aut}(A)$ and $q: L\to\hbox{Aut}(A)$ are the maps defined by \[p(\xi) = P(\xi^{-1},\xi)\quad \hbox{and} \quad q(\xi) = Q(\xi^{-1},\xi).\]
\end{proposition}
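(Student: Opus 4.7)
The plan is to read off the inverse formulas from Lemma \ref{leftandrightinverses} and compare them component-wise. The first components of $(\epsilon,0)/(\xi,x)$ and $(\xi,x)\backslash(\epsilon,0)$ are $\epsilon/\xi$ and $\xi\backslash\epsilon$ respectively, and these lie entirely in $L$ and do not depend on $x$. So coincidence of left and right inverses in $F(P,Q)$ forces $\epsilon/\xi=\xi\backslash\epsilon$ in $L$ for every $\xi$; this is exactly the corresponding property in $L$. Conversely, if $L$ has this property, then the first components already agree and we only need to analyse the $A$-components.

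From that point on I would write $\xi^{-1}$ for the common two-sided inverse in $L$ and record the small but essential observation that $(\xi^{-1})^{-1}=\xi$, since $\xi$ is simultaneously a left and right inverse of $\xi^{-1}$. This identity is what lets me rewrite $P(\xi,\xi^{-1})=P((\xi^{-1})^{-1},\xi^{-1})=p(\xi^{-1})$ and similarly $Q(\xi,\xi^{-1})=q(\xi^{-1})$, tying the expressions for the right inverse to the abbreviations $p,q$.

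Next, the $A$-components of the left and right inverses in \eqref{lri} are the images of $x$ under the automorphisms $-P(\xi^{-1},\xi)^{-1}Q(\xi^{-1},\xi)$ and $-Q(\xi,\xi^{-1})^{-1}P(\xi,\xi^{-1})$. Equality for all $x\in A$ is therefore equivalent to the operator identity
\[
p(\xi)^{-1}q(\xi)\;=\;q(\xi^{-1})^{-1}p(\xi^{-1}).
\]
Multiplying on the left by $q(\xi^{-1})$ rearranges this into the claimed condition \eqref{cip}. Running the argument in reverse from \eqref{cip} gives the converse direction.

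I do not anticipate a serious obstacle: the computation is routine once Lemma \ref{leftandrightinverses} is in hand. The one place that must be handled carefully is the bookkeeping step $P(\xi,\xi^{-1})=p(\xi^{-1})$, $Q(\xi,\xi^{-1})=q(\xi^{-1})$, which relies on the involutivity $(\xi^{-1})^{-1}=\xi$ valid precisely under the hypothesis that $L$ has coinciding left and right inverses; without this remark the symmetry between the two sides of \eqref{cip} is obscured.
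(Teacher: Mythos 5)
Your argument is correct and follows essentially the same route as the paper: apply Lemma \ref{leftandrightinverses}, match first components to obtain the coincidence property in $L$, and match the $A$-components as an operator identity which, after the substitutions $P(\xi,\xi^{-1})=p(\xi^{-1})$ and $Q(\xi,\xi^{-1})=q(\xi^{-1})$, rearranges to \eqref{cip}. Your explicit remark that $(\xi^{-1})^{-1}=\xi$ underlies this substitution is a detail the paper leaves implicit, but the proof is the same.
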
	
\begin{proof} 
It follows from Lemma \ref{leftandrightinverses} that the left and right inverses of elements of $F(P,Q)$ coincide if and only if for all $\xi\in L$ 
\[\xi^{-1} = \epsilon/\xi = \xi\backslash\epsilon \quad \hbox{and} \quad -P(\xi^{-1},\xi)^{-1} Q(\xi^{-1},\xi) = -Q(\xi,\xi^{-1})^{-1} P(\xi,\xi^{-1}),\] 
which is equivalent to the assertion. 
\end{proof}
In the following we construct maps $p, q: L\to\hbox{Aut}(A)$ satisfying the condition (\ref{cip}).
\begin{construction} \label{pqcons}  Let be $q: L\to\hbox{Aut}(A)$ with $q(\epsilon) = \hbox{Id}$  an arbitrary map. The elements of the orbits $\{\xi,\xi^{-1}\}$ of the group generated by the map $\xi \mapsto \xi^{-1}$ are interchanged if $\xi\neq \xi^{-1}$. For any orbit with two elements we choose freely the value $p(\xi)$ of $p: L\to\hbox{Aut}(A)$ at one of the elements $\xi\in\{\xi,\xi^{-1}\}$, and define at the other element $\xi^{-1}\in\{\xi,\xi^{-1}\}$ the value $p(\xi^{-1}):= q(\xi^{-1})p(\xi)^{-1}q(\xi)$. Denoting $\eta =\xi^{-1}$ and computing $p(\eta^{-1})$ we obtain  
	\[p(\eta^{-1}) = q(\eta^{-1})p(\eta)^{-1}q(\eta) = q(\xi)p(\xi^{-1})^{-1}q(\xi^{-1}) = p(\xi),\] 
	which means that the map $\iota: \xi \mapsto \xi^{-1}$ induces the involution  $\mathcal{I}:(\xi,p(\xi))\mapsto (\xi^{-1},p(\xi^{-1}))$.
 Hence $p: L\to\hbox{Aut}(A)$ is well defined on the set $\{\xi\in L;\ \xi\neq \xi^{-1}\}$. If $\xi = \xi^{-1}$ we choose the value $p(\xi)$ satisfying $\left(p(\xi)^{-1}q(\xi)\right)^2 = \hbox{Id}$, particularly $p(\epsilon) = \hbox{Id}$. Consequently, the condition (\ref{cip}) is satisfied at all $\xi\in L$. Let be \[P(\xi,\epsilon): = \hbox{Id},\quad Q(\epsilon,\xi) := \hbox{Id},\quad P(\xi^{-1},\xi) := p(\xi),\quad Q(\xi^{-1},\xi) := q(\xi)\] 
	and define $P(\xi,\eta)$ and $Q(\xi,\eta)$ arbitrarily on the complement of the subset \[\Sigma = \{(\xi,\epsilon); \;\xi\in L\}\cup\{(\epsilon,\xi); \;\xi\in L\}\cup\{(\xi^{-1},\xi); \;\xi\in L\}\subset L\times L.\] \end{construction}  
We obtain the following 
\begin{corollary}
The left and right inverses of all elements of  $F(P,Q)$ coincide if and only if $L$ has this property and the equations $P(\xi^{-1},\xi) = p(\xi)$ and $Q(\xi^{-1},\xi) = q(\xi)$ are satisfied for any $\xi\in L$, where the maps $p, q: L\to\hbox{Aut}(A)$ are defined in Construction \ref{pqcons}.
\end{corollary}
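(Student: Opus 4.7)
The plan is to deduce this corollary directly from the preceding Proposition by showing that Construction~\ref{pqcons} enumerates exactly those pairs of maps $(p,q)$ with $p(\epsilon)=q(\epsilon)=\hbox{Id}$ satisfying condition~(\ref{cip}). The corollary then collapses into a reformulation of the Proposition in which the algebraic equation~(\ref{cip}) is replaced by an explicit parametric description of its solutions.

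For the sufficiency direction, I would assume that $P(\xi^{-1},\xi)=p(\xi)$ and $Q(\xi^{-1},\xi)=q(\xi)$ for maps $p,q$ produced by Construction~\ref{pqcons}. The task is to verify~(\ref{cip}) and apply the Proposition. On two-element orbits $\{\xi,\xi^{-1}\}$ the equality $p(\xi^{-1})=q(\xi^{-1})p(\xi)^{-1}q(\xi)$ holds by the very definition of $p(\xi^{-1})$ in the Construction; the computation already recorded in the Construction (the one checking that $\iota$ induces the involution $\mathcal{I}$) shows that the resulting value of $p$ does not depend on which element of the orbit is chosen as the ``free'' one. At fixed points $\xi=\xi^{-1}$ the condition~(\ref{cip}) reduces to $p(\xi)=q(\xi)p(\xi)^{-1}q(\xi)$, equivalently $(p(\xi)^{-1}q(\xi))^2=\hbox{Id}$, which is precisely the constraint imposed by the Construction at such points.

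For the necessity direction, I would assume that the left and right inverses of every element of $F(P,Q)$ coincide. The Proposition forces $L$ to have two-sided inverses and forces~(\ref{cip}) for the maps $p(\xi):=P(\xi^{-1},\xi)$, $q(\xi):=Q(\xi^{-1},\xi)$. It remains to exhibit these particular $p,q$ as the output of Construction~\ref{pqcons}. One simply takes the arbitrary input map of the Construction to be the given $q$, and on each two-element orbit $\{\xi,\xi^{-1}\}$ one selects the given value of $p$ at a chosen representative; the Construction then predicts the value at the other element of the orbit via $q(\xi^{-1})p(\xi)^{-1}q(\xi)$, and~(\ref{cip}) guarantees that this agrees with the given $p(\xi^{-1})$. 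For fixed points, the instance of~(\ref{cip}) guarantees that the given $p(\xi)$ lies in the admissible set $\{p:(p^{-1}q(\xi))^2=\hbox{Id}\}$ from which the Construction is allowed to choose, and the normalisation $p(\epsilon)=\hbox{Id}$ is consistent with the cocycle condition $P(\epsilon,\epsilon)=\hbox{Id}$.

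I expect the only real subtlety to be the consistency check on orbits of size two (independence of the chosen representative) and the verification that the fixed-point admissibility set is nonempty exactly when~(\ref{cip}) is solvable there; both points are however already settled inside Construction~\ref{pqcons} itself, so the proof essentially consists of quoting the construction and the Proposition, with the extension of $P,Q$ off of $\Sigma$ playing no role.
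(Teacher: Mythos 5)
Your proposal is correct and coincides with the argument the paper intends (the corollary is stated without proof as an immediate consequence): Construction~\ref{pqcons} parametrizes exactly the pairs $(p,q)$ with $q(\epsilon)=\hbox{Id}$ satisfying (\ref{cip}), so the corollary is just the Proposition with (\ref{cip}) replaced by that parametrization. Both directions, including the consistency check on two-element orbits and the equivalence of $(p(\xi)^{-1}q(\xi))^2=\hbox{Id}$ with (\ref{cip}) at fixed points, are handled as the paper's construction already does.
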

\begin{remark} Construction \ref{pqcons} can be simplified if $L$ is an ordered loop. If $\xi\neq\epsilon$ then $\xi\neq \xi^{-1}$, one of the elements of the orbit $\{\xi,\xi^{-1}\}$ is positive and the other is negative. For positive elements of $L$ we choose freely the value $p(\xi)$, and define at $\xi^{-1}$ by $p(\xi^{-1}):= q(\xi^{-1})p(\xi)^{-1}q(\xi)$. Let be $p(\epsilon) = \hbox{Id}$.
Hence $p: L\to\hbox{Aut}(A)$ is well defined. Consequently, the condition (\ref{cip}) is satisfied at all $\xi\in L$.\end{remark}

\section{Extensions with left or right inverse property} \label{leftinv} 

\subsection*{Left inverse property} 
\begin{proposition} \label{leftinverse} 
The extension $F(P,Q)$ has the left inverse property if and only if $L$ has the left inverse property and the equations   
\begin{equation} \label{eq3} Q(\xi^{-1},\xi\eta) = Q(\xi,\eta)^{-1}, \quad P(\xi^{-1},\xi\eta) = Q(\xi,\eta)^{-1}P(\xi,\eta)Q(\xi^{-1},\xi)^{-1}P(\xi^{-1},\xi) \end{equation} 
hold for all $\xi,\eta \in L$.
\end{proposition}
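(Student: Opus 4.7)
The plan is to compare $(\xi,x)^{-1}\cdot\bigl((\xi,x)(\eta,y)\bigr)$ with $(\eta,y)$ coordinate-by-coordinate, using (\ref{ext2}) for the product and Lemma \ref{leftandrightinverses} for the inverse. Before starting, I would observe that if $F(P,Q)$ has the left inverse property then, by Pflugfelder's theorem cited in the preliminaries, the left and right inverses in $F(P,Q)$ coincide; in particular, the first-coordinate projection forces $\epsilon/\xi=\xi\backslash\epsilon$ in $L$, so the expression from Lemma \ref{leftandrightinverses} reduces to $(\xi,x)^{-1}=\bigl(\xi^{-1},\,-P(\xi^{-1},\xi)^{-1}Q(\xi^{-1},\xi)x\bigr)$.

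For the necessity direction I would expand
\[
(\xi,x)^{-1}\cdot\bigl((\xi,x)(\eta,y)\bigr)=\Bigl(\xi^{-1}(\xi\eta),\; -P(\xi^{-1},\xi\eta)P(\xi^{-1},\xi)^{-1}Q(\xi^{-1},\xi)x + Q(\xi^{-1},\xi\eta)\bigl(P(\xi,\eta)x+Q(\xi,\eta)y\bigr)\Bigr)
\]
and demand that this equals $(\eta,y)$ for all $\xi,\eta\in L$ and all $x,y\in A$. The first coordinate yields $\xi^{-1}(\xi\eta)=\eta$, i.e. the left inverse property for $L$. The second coordinate is affine in the pair $(x,y)$, so it splits into two independent conditions. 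Matching the $y$-coefficient gives $Q(\xi^{-1},\xi\eta)Q(\xi,\eta)=\mathrm{Id}$, which is the first equation of (\ref{eq3}); substituting this into the $x$-coefficient condition and solving for $P(\xi^{-1},\xi\eta)$ yields the second equation of (\ref{eq3}).

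For sufficiency I would reverse the computation: assuming $L$ has the left inverse property and (\ref{eq3}) holds, I define $\iota\colon F(P,Q)\to F(P,Q)$ by $\iota(\xi,x):=\bigl(\xi^{-1},-P(\xi^{-1},\xi)^{-1}Q(\xi^{-1},\xi)x\bigr)$ and plug (\ref{eq3}) into the above display to verify directly that $\iota(\xi,x)\cdot\bigl((\xi,x)(\eta,y)\bigr)=(\eta,y)$. Bijectivity of $\iota$ is immediate, since $\xi\mapsto\xi^{-1}$ is a bijection of $L$ and, for each fixed $\xi$, the second coordinate is an affine map of $A$ with invertible linear part.

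I do not expect a genuine obstacle: the argument is a mechanical linear-algebra-over-$A$ computation once the coordinates are separated. The only point requiring care is the very first one — checking that the left inverse property in $F(P,Q)$ justifies writing $\xi^{-1}$ rather than $\epsilon/\xi$ in the formulas of Lemma \ref{leftandrightinverses}, so that the expressions in (\ref{eq3}) are well-defined and correctly matched against the coefficients arising from the expansion.
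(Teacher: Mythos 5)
Your proposal is correct and follows essentially the same route as the paper: expand $(\xi,x)^{-1}\cdot\bigl((\xi,x)(\eta,y)\bigr)$ via (\ref{ext2}) and Lemma \ref{leftandrightinverses}, read off the first coordinate as the left inverse property of $L$, separate the $x$- and $y$-coefficients in the second coordinate, and substitute the $Q$-identity into the $P$-identity. Your extra remark justifying the use of $\xi^{-1}$ in place of $\epsilon/\xi$ is a point the paper passes over silently, but it does not change the argument.
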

\begin{proof}
$F(P,Q)$ has the left inverse property if and only if 
\[(\xi,x)^{-1} \cdot (\xi,x) (\eta,y) = (\xi^{-1},-P(\xi^{-1},\xi)^{-1} Q(\xi^{-1},\xi) x)(\xi\eta,P(\xi,\eta)x+Q(\xi,\eta)y) =(\eta,y),\] or equivalently $L$ has this property and we have 
\begin{equation} \label{eq1}  -P(\xi^{-1},\xi\eta)P(\xi^{-1},\xi)^{-1} Q(\xi^{-1},\xi)x + Q(\xi^{-1},\xi\eta)(P(\xi,\eta)x+ Q(\xi,\eta)y) = y\end{equation}  
for all $\xi,\eta \in L$ and $x,y \in A$. This is equivalent to the identities
\begin{equation} \label{eq2} Q(\xi^{-1},\xi\eta) Q(\xi,\eta)=\hbox{Id} \quad \hbox{and} \quad P(\xi^{-1},\xi\eta) = Q(\xi^{-1},\xi\eta)P(\xi,\eta)Q(\xi^{-1},\xi)^{-1}P(\xi^{-1},\xi). \end{equation} 
Replacing $Q(\xi^{-1},\xi\eta) = Q(\xi,\eta)^{-1}$ into the second identity we obtain the assertion. 
 \end{proof}
Now, we build linear abelian extensions $F(P,Q)$ satisfying the left inverse property. We use the identities obtained from (\ref{eq3}) putting $\xi^{-1}$ into $\xi$ and $\xi$ into $\eta$:
\[Q(\xi,\epsilon) = Q(\xi^{-1},\xi)^{-1}, \quad P(\xi,\epsilon) = Q(\xi^{-1},\xi)^{-1}P(\xi^{-1},\xi)Q(\xi,\xi^{-1})^{-1}P(\xi,\xi^{-1}).\]
The first of these identities means $Q(\xi,\epsilon) = q(\xi)^{-1}$ and the second one implies (\ref{cip}).
\begin{construction}\label{conlip}  We define the loop cocycle $(P,Q)$ on the subset \[\Sigma = \{(\xi,\epsilon); \;\xi\in L\}\cup\{(\epsilon,\xi); \;\xi\in L\}\cup\{(\xi^{-1},\xi); \;\xi\in L\}\subset L\times L\] as follows:
\begin{equation}\label{sigma} \begin{split} &P(\xi,\epsilon):= \hbox{Id},\; Q(\epsilon,\xi):= \hbox{Id},\\ &\xi\mapsto P(\epsilon,\xi):L\setminus\{\epsilon\}\to\hbox{Aut}(A) \; \hbox{is an arbitrary map},  \\
&Q(\xi^{-1},\xi) = Q(\xi,\epsilon)^{-1} := q(\xi),\;\hbox{where}\; q: L\setminus\{\epsilon\}\to\hbox{Aut}(A) \;\hbox{is an arbitrary map},\\ &P(\xi^{-1},\xi) := p(\xi),\;\hbox{where}\; p:L\to\hbox{Aut}(A)\; \hbox{is satisfying the condition (\ref{cip})}.\end{split}\end{equation}
The maps $p:L\to\hbox{Aut}(A)$ satisfying the equation (\ref{cip}) are described in Construction \ref{pqcons}. The permutation $\varphi:(\xi,\eta)\mapsto (\xi^{-1},\xi\eta)$ acting on the set $(L \times L)\setminus\Sigma$ interchanges the pairs of elements of the orbits of the group $\Gamma_\varphi$ generated by $\varphi$. For any orbit of $\Gamma_\varphi$ we choose arbitrarily the value $Q(\xi,\eta)$, respectively $P(\xi,\eta)$ of the loop cocycle $(P,Q)$ at one of the elements of the orbit, and define the value $Q(\varphi(\xi,\eta)) = Q(\xi^{-1},\xi\eta)$, respectively $P(\varphi(\xi,\eta)) = P(\xi^{-1},\xi\eta)$, at the other element $\varphi(\xi,\eta)$ by 
\begin{equation}\label{minussigma} Q(\xi^{-1},\xi\eta) := Q(\xi,\eta)^{-1}, \quad P(\xi^{-1},\xi\eta) := Q(\xi,\eta)^{-1}P(\xi,\eta)Q(\xi^{-1},\xi)^{-1}P(\xi^{-1},\xi).\end{equation} 
Putting $\varphi(\xi,\eta) = (\xi^{-1},\xi\eta)$ into $(\xi,\eta)$ we obtain from (\ref{minussigma})
the equations
\begin{equation}\label{plussigma} Q(\xi,\eta) = Q(\xi^{-1},\xi\eta)^{-1}, \quad P(\xi,\eta) = Q(\xi^{-1},\xi\eta)^{-1}P(\xi^{-1},\xi\eta)Q(\xi,\xi^{-1})^{-1}P(\xi,\xi^{-1}).\end{equation}
We express from the second equation \[P(\xi,\eta)^{-1}Q(\xi^{-1},\xi\eta)^{-1}P(\xi^{-1},\xi\eta) = P(\xi,\xi^{-1})^{-1}Q(\xi,\xi^{-1})\] 
we obtain using the identity $P(\xi,\xi^{-1})^{-1}Q(\xi,\xi^{-1})  = Q(\xi^{-1},\xi)^{-1}P(\xi^{-1},\xi)$ (cf. (\ref{cip}) )
\[P(\xi,\eta)^{-1}Q(\xi,\eta)P(\xi^{-1},\xi\eta) = Q(\xi^{-1},\xi)^{-1}P(\xi^{-1},\xi)\] 
giving the second equation of (\ref{minussigma}).
It follows that the map 
\begin{equation}\label{Phiext}\Phi:((\xi,\eta),P(\xi,\eta),Q(\xi,\eta))\mapsto (\varphi(\xi,\eta),P(\varphi(\xi,\eta)),Q(\varphi(\xi,\eta))\end{equation}
is an involution and hence the definition (\ref{minussigma}) of the loop cocycle $(P,Q)$ is independent of the choice of the element $(\xi,\eta)$ of an orbit. 
\end{construction}

\begin{corollary} The linear abelian extensions $F(P,Q)$ determined by the conditions  (\ref{sigma}) and (\ref{minussigma}) satisfy the left inverse property. Conversely, any linear abelian extension $F(P,Q)$ of $A$ by $L$ having the left inverse property fulfills the conditions (\ref{sigma}) and (\ref{minussigma}) .
\end{corollary}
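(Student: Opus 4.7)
The plan is to read the corollary as a translation of Proposition \ref{leftinverse} into the concrete language of Construction \ref{conlip}: the proposition reduces the LIP of $F(P,Q)$, given LIP of $L$, to the pair of identities (\ref{eq3}), so it is enough to match these identities against the piecewise definition (\ref{sigma})--(\ref{minussigma}).

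For the forward direction I would verify (\ref{eq3}) separately on $\Sigma$ and on its complement. On $(L\times L)\setminus\Sigma$, a preliminary check is that $\varphi\colon(\xi,\eta)\mapsto(\xi^{-1},\xi\eta)$ really preserves the complement: if $\varphi(\xi,\eta)$ were of the form $(\alpha,\epsilon)$, $(\epsilon,\beta)$ or $(\beta^{-1},\beta)$, then by LIP of $L$ one would find $\eta=\xi^{-1}$, $\xi=\epsilon$, or $\eta=\epsilon$ respectively, each forcing $(\xi,\eta)\in\Sigma$. Since $\varphi$ is an involution there, its $\Gamma_\varphi$-orbits have at most two elements; formula (\ref{minussigma}) is (\ref{eq3}) at a chosen representative of each orbit, and the involutive character of the map $\Phi$ in (\ref{Phiext}), already verified inside Construction \ref{conlip}, propagates (\ref{eq3}) to the second representative. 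On $\Sigma$ I would treat the three prescribed subsets in turn: for $(\epsilon,\eta)$ both identities reduce to tautologies via $P(\epsilon,\epsilon)=Q(\epsilon,\eta)=\hbox{Id}$; for $(\xi,\epsilon)$ the $Q$-identity becomes $Q(\xi^{-1},\xi)=Q(\xi,\epsilon)^{-1}$, i.e.\ $q(\xi)=q(\xi)$, and after substituting $Q(\xi,\epsilon)^{-1}=q(\xi)$ the $P$-identity collapses to $p(\xi)=p(\xi)$; finally for $(\xi^{-1},\xi)$ the $P$-identity reads $\hbox{Id}=q(\xi)^{-1}p(\xi)q(\xi^{-1})^{-1}p(\xi^{-1})$, which is exactly condition (\ref{cip}) and therefore built into $p$ by Construction \ref{pqcons}.

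For the converse I would start from a linear abelian extension $F(P,Q)$ with the left inverse property, apply Proposition \ref{leftinverse}, and set $q(\xi):=Q(\xi^{-1},\xi)$, $p(\xi):=P(\xi^{-1},\xi)$. Specialising (\ref{eq3}) at $\eta=\epsilon$ gives $Q(\xi,\epsilon)=q(\xi)^{-1}$ and, after a short rearrangement of the second identity, the relation (\ref{cip}); together with the loop-cocycle constraints $P(\xi,\epsilon)=Q(\epsilon,\xi)=\hbox{Id}$ and the free choice of $P(\epsilon,\xi)$, this reproduces (\ref{sigma}), while (\ref{eq3}) at a generic $(\xi,\eta)\notin\Sigma$ is literally (\ref{minussigma}).

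The one delicate ingredient I expect is the case $(\xi^{-1},\xi)\in\Sigma$ of the forward direction, which is precisely where the seemingly ad hoc hypothesis (\ref{cip}) imposed inside Construction \ref{pqcons} earns its keep; the remainder of the argument is a routine substitution into (\ref{eq3}) and into the defining formulas of Construction \ref{conlip}.
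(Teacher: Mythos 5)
Your proposal is correct and follows essentially the same route as the paper: the paper proves the corollary implicitly by the remarks preceding Construction \ref{conlip} (specializing (\ref{eq3}) at $(\xi^{-1},\xi)$ to obtain $Q(\xi,\epsilon)=q(\xi)^{-1}$ and (\ref{cip})) together with the verification inside the construction that the map $\Phi$ of (\ref{Phiext}) is an involution, which is exactly your case analysis on $\Sigma$ versus its complement. Your explicit checks that $\varphi$ preserves $(L\times L)\setminus\Sigma$ and that (\ref{eq3}) is tautological on the first two pieces of $\Sigma$ are details the paper leaves tacit, but they match its argument.
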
 
\begin{remark} \label{leftremark} If $L$ is an ordered loop then the definition of the loop cocycle $(P,Q)$ can be simplified: We choose arbitrarily the value $Q(\xi,\eta)$, respectively $P(\xi,\eta)$, if $\xi$ is positive and define the value $Q(\xi^{-1},\xi\eta)$, respectively $P(\xi^{-1},\xi\eta)$, at the element $(\xi^{-1},\xi\eta)$ by 
(\ref{minussigma}). \end{remark}

\subsection*{Right inverse property} 

The opposite loop $F(P,Q)$ of a linear abelian extension having the left inverse property  satisfies the right inverse property, i.e. the identity $(\eta,y)=(\eta,y)  (\xi,x) \cdot (\xi,x)^{-1}$ holds for all $\xi,\eta \in L$ and $x,y \in A$ in $F(P,Q)$. Hence we obtain the following statements: 
\begin{proposition} \label{rightinverse} 
The linear abelian extension $F(P,Q)$ given by (\ref{ext2}) has the right inverse property if and only if $L$ has the right inverse property and the following identities are satisfied: 
\begin{equation} \label{eq4} P(\xi\eta,\eta^{-1})=P(\xi,\eta)^{-1}, \quad Q(\xi\eta,\eta^{-1})=P(\xi,\eta)^{-1}Q(\xi,\eta)P(\eta,\eta^{-1})^{-1}Q(\eta,\eta^{-1}). \end{equation} 
\end{proposition}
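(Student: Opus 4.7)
The plan is to deduce the proposition from Proposition \ref{leftinverse} by the opposite-loop duality explicitly invoked in the paragraph preceding the statement. Since the right inverse property of any loop coincides with the left inverse property of its opposite, it suffices to check that applying Proposition \ref{leftinverse} to the opposite of $F(P,Q)$ yields exactly the conditions (\ref{eq4}).

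The first step is to verify that $F(P,Q)^{op}$ is itself a linear abelian extension of $A$ by $L^{op}$, and to identify its cocycle. Reading off the opposite product
\[(\alpha,a)\star(\beta,b)=(\beta,b)\cdot(\alpha,a)=(\beta\alpha,\,Q(\beta,\alpha)a+P(\beta,\alpha)b)\]
from (\ref{ext2}) gives the cocycle $P^{op}(\alpha,\beta)=Q(\beta,\alpha)$, $Q^{op}(\alpha,\beta)=P(\beta,\alpha)$, whose normalizations at $\epsilon$ are automatic from those of $(P,Q)$. Moreover, the left inverse property of $L^{op}$ is the same statement as the right inverse property of $L$.

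With this setup in hand, I apply Proposition \ref{leftinverse} to $F(P,Q)^{op}$. Writing the opposite product as $\alpha\star\beta=\beta\alpha$ and observing that $\alpha^{-1}$ in $L^{op}$ coincides with $\alpha^{-1}$ in $L$, the two identities of (\ref{eq3}) for $(P^{op},Q^{op})$ unfold into $P(\beta\alpha,\alpha^{-1})=P(\beta,\alpha)^{-1}$ and $Q(\beta\alpha,\alpha^{-1})=P(\beta,\alpha)^{-1}Q(\beta,\alpha)P(\alpha,\alpha^{-1})^{-1}Q(\alpha,\alpha^{-1})$, which after the renaming $\beta\mapsto\xi$, $\alpha\mapsto\eta$ become precisely (\ref{eq4}). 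The converse direction runs backwards through exactly the same chain of equivalences.

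The step I expect to require the most care is the bookkeeping of the two simultaneous reversals — the swap of arguments in $(P,Q)\mapsto(P^{op},Q^{op})$ and the reversal of products in passing to $L^{op}$ — which must be tracked consistently when substituting into (\ref{eq3}). Apart from this clerical work, there is no conceptual obstacle: all the real content is already packaged in Proposition \ref{leftinverse}. A fully direct alternative would expand $(\eta,y)(\xi,x)\cdot(\xi,x)^{-1}$ via (\ref{ext2}) and Lemma \ref{leftandrightinverses}, set the result equal to $(\eta,y)$, and equate coefficients of $x$ and $y$ in the $A$-component; this mirrors the proof of Proposition \ref{leftinverse} and yields (\ref{eq4}) directly.
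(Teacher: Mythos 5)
Your proposal is correct and takes essentially the same approach as the paper: the paper likewise obtains Proposition \ref{rightinverse} from Proposition \ref{leftinverse} by passing to the opposite loop, and your explicit bookkeeping of the induced cocycle $(P^{op},Q^{op})(\alpha,\beta)=(Q(\beta,\alpha),P(\beta,\alpha))$ and the renaming of variables correctly reproduces (\ref{eq4}).
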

\begin{construction}\label{conrip}  Let the loop cocycle $(P,Q)$ be defined by  
\begin{equation}\label{sigmar} \begin{split} &P(\xi,\epsilon):= \hbox{Id},\; Q(\epsilon,\xi):= \hbox{Id},\\ &\xi\mapsto Q(\xi,\epsilon):L\setminus\{\epsilon\}\to\hbox{Aut}(A) \; \hbox{is an arbitrary map},  \\
&P(\xi^{-1},\xi) = P(\epsilon,\xi)^{-1} := p(\xi),\;\hbox{where}\; p: L\setminus\{\epsilon\}\to\hbox{Aut}(A)\;  \hbox{is satisfying (\ref{cip})},\\ &Q(\xi^{-1},\xi) := q(\xi),\;\hbox{where}\; q:L\to\hbox{Aut}(A)\; \hbox{is an arbitrary map}\end{split}\end{equation}
on the subset $\Sigma$.  The permutation $\psi:(\xi,\eta)\mapsto (\xi\eta,\eta^{-1})$ acting on $(L \times L)\setminus\Sigma$ interchanges the disjoint elements of the orbits of the group $\Gamma_\psi$ generated by $\psi$. We choose the values $P(\xi,\eta)$ and $Q(\xi,\eta)$ arbitrarily at one of the elements of the orbits and define  
\begin{equation}\label{minussigmar} P(\xi\eta,\eta^{-1}) := P(\xi,\eta)^{-1}, \quad Q(\xi\eta,\eta^{-1}) := P(\xi,\eta)^{-1}Q(\xi,\eta)P(\eta,\eta^{-1})^{-1}Q(\eta,\eta^{-1}).\end{equation} 
Clearly the loop cocycle $(P,Q)$ is independent of the choice of the element $(\xi,\eta)$ of an orbit.  
\end{construction}

\begin{corollary} The linear abelian extensions $F(P,Q)$ determined by the conditions  (\ref{sigmar}) and 
(\ref{minussigmar}) satisfy the right inverse property. Conversely, any linear abelian extension $F(P,Q)$ of $A$ by $L$ having the right inverse property fulfills the conditions  (\ref{sigmar}) and (\ref{minussigmar}). 
\end{corollary}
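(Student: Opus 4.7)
The natural approach is to invoke the opposite-loop duality already used to derive Proposition \ref{rightinverse}. The opposite of $F(P,Q)$ is itself a linear abelian extension of $A$ by $L^{op}$, with cocycle $(\tilde P,\tilde Q)$ given by $\tilde P(\alpha,\beta)=Q(\beta,\alpha)$ and $\tilde Q(\alpha,\beta)=P(\beta,\alpha)$, and it has the left inverse property iff $F(P,Q)$ has the right inverse property. Under the swap $(\xi,\eta)\mapsto(\eta,\xi)$ the subset $\Sigma$ is invariant, the permutation $\varphi$ of Construction \ref{conlip} computed in $L^{op}$ transports to $\psi:(\xi,\eta)\mapsto(\xi\eta,\eta^{-1})$ of Construction \ref{conrip}, and the defining relations (\ref{sigma})--(\ref{minussigma}) for $(\tilde P,\tilde Q)$ become precisely (\ref{sigmar})--(\ref{minussigmar}) for $(P,Q)$. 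Thus the claim follows from the preceding corollary applied to the opposite extension.

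For a direct verification, I would appeal to Proposition \ref{rightinverse} in two steps. Forward: if $(P,Q)$ is built by Construction \ref{conrip}, then on $(L\times L)\setminus\Sigma$ equations (\ref{eq4}) are (\ref{minussigmar}) by definition; on $\Sigma$ the specializations $\xi=\epsilon$, $\eta=\epsilon$, and $\eta=\xi^{-1}$ of (\ref{eq4}) reduce to the relations prescribed by (\ref{sigmar}), with (\ref{cip}) reconciling the two values $P(\xi^{-1},\xi)$ and $P(\xi,\xi^{-1})$ via the $q$'s. Consistency of the off-$\Sigma$ prescription requires $\psi$ to be an involution on $(L\times L)\setminus\Sigma$ — this is the right inverse property of $L$ — and that the lifted map on triples $((\xi,\eta),P(\xi,\eta),Q(\xi,\eta))$ is also an involution, a short computation using (\ref{cip}) in the spirit of the map $\Phi$ of (\ref{Phiext}).

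Converse: if $F(P,Q)$ has the right inverse property, Proposition \ref{rightinverse} delivers equations (\ref{eq4}), and their specializations at $\xi=\epsilon$, $\eta=\epsilon$, $\eta=\xi^{-1}$ recover the $\Sigma$-relations of (\ref{sigmar}); the coincidence of one-sided inverses in $F(P,Q)$, which is implied by the right inverse property, yields condition (\ref{cip}) for the resulting $p$. Off $\Sigma$, equations (\ref{eq4}) are (\ref{minussigmar}) verbatim, so $(P,Q)$ arises from Construction \ref{conrip}. The main obstacle in the direct approach is the bookkeeping around $\psi$-orbits that touch $\Sigma$ — this is precisely what the duality argument bypasses by transporting the already-verified left-inverse corollary.
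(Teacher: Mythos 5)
Your primary argument --- transporting the left-inverse-property corollary through the opposite loop, with cocycle $\tilde P(\alpha,\beta)=Q(\beta,\alpha)$, $\tilde Q(\alpha,\beta)=P(\beta,\alpha)$, under which $\varphi$ becomes $\psi$ and (\ref{sigma})--(\ref{minussigma}) become (\ref{sigmar})--(\ref{minussigmar}) --- is exactly how the paper obtains Proposition \ref{rightinverse}, Construction \ref{conrip}, and this corollary, so the proposal is correct and follows the paper's route. The supplementary direct verification via Proposition \ref{rightinverse} is a sound alternative but is not needed.
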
 
\begin{remark} If $L$ is an ordered loop, then the loop cocycle $(P,Q)$ can be determined by choosing the value for $Q(\xi,\eta)$, respectively $P(\xi,\eta)$ arbitrarily, if $\eta$ is positive, and defining $P(\xi\eta,\eta^{-1})$ and $Q(\xi\eta,\eta^{-1})$ by (\ref{minussigmar}).  \end{remark}

\section{Inverse property} 

\begin{definition}
	A linear abelian extension $F(P,Q)$ of the abelian group $A$ by the loop $L$ is called \emph{strongly linear abelian} if the multiplication satisfies 
	\begin{equation} \label{stli} (\epsilon,a) \cdot (\beta,b) = (\beta,b) \cdot (\epsilon,a) = (\beta,a + b) \end{equation} 
	for any $a,b \in A$ and $\beta\in L$. 
\end{definition}
The loop cocycle $(P, Q)$ determines a strongly linear abelian extension $F(P,Q)$ if and only if $P(\xi,\epsilon)=P(\epsilon,\xi)=\hbox{Id} =Q(\xi,\epsilon)=Q(\epsilon,\xi)$ for every $\xi \in L$. 
\begin{proposition} 
	A strongly linear abelian extension $F(P,Q)$ has the inverse property if and only if $L$ has the  inverse property and the equations   
	\begin{equation} \label{equip}\begin{split} P(\xi\eta,\eta^{-1})=P(\xi,\eta)^{-1}, \quad Q(\xi\eta,\eta^{-1})=P(\xi,\eta)^{-1}Q(\xi,\eta),\\  Q(\xi^{-1},\xi\eta) = Q(\xi,\eta)^{-1}, \quad P(\xi^{-1},\xi\eta) = Q(\xi,\eta)^{-1}P(\xi,\eta) \end{split} \end{equation} 
	hold for all $\xi,\eta \in L$.
\end{proposition}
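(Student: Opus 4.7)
The plan is to obtain the proposition by combining the left and right inverse property characterizations already proved in Propositions~\ref{leftinverse} and~\ref{rightinverse}, specialized to the strongly linear case. Since a loop has the inverse property precisely when it has both the left and right inverse property, the extension $F(P,Q)$ has the inverse property if and only if $L$ has both these properties (equivalently, $L$ has the inverse property) and the cocycle $(P,Q)$ satisfies both (\ref{eq3}) and (\ref{eq4}). My task is therefore to show that, under the strongly linear hypothesis, the conjunction of (\ref{eq3}) and (\ref{eq4}) reduces exactly to (\ref{equip}).

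First I would exploit the identities $P(\xi,\epsilon)=P(\epsilon,\xi)=Q(\xi,\epsilon)=Q(\epsilon,\xi)=\hbox{Id}$ of a strongly linear extension to trivialize the ``correction factors'' $P(\xi^{-1},\xi)$, $Q(\xi^{-1},\xi)$, $P(\eta,\eta^{-1})$, $Q(\eta,\eta^{-1})$ that appear on the right-hand sides of (\ref{eq3}) and (\ref{eq4}). Setting $\eta=\epsilon$ in the first equation of (\ref{eq3}) gives $Q(\xi^{-1},\xi)=Q(\xi,\epsilon)^{-1}=\hbox{Id}$; replacing $\xi$ by $\xi^{-1}$ and using $(\xi^{-1})^{-1}=\xi$ (which holds because $L$ has the inverse property) yields $Q(\xi,\xi^{-1})=\hbox{Id}$. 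Dually, setting $\xi=\epsilon$ in the first equation of (\ref{eq4}) yields $P(\eta,\eta^{-1})=P(\epsilon,\eta)^{-1}=\hbox{Id}$, and the substitution $\eta\mapsto\eta^{-1}$ gives $P(\xi^{-1},\xi)=\hbox{Id}$.

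Substituting these four trivial values into (\ref{eq3}) and (\ref{eq4}) makes each of them collapse verbatim into the corresponding line of (\ref{equip}). For the converse, assume $F(P,Q)$ is strongly linear and that $(P,Q)$ satisfies (\ref{equip}). Setting $\xi=\epsilon$ in the first equation of (\ref{equip}) and $\eta=\epsilon$ in the third equation recovers $P(\eta,\eta^{-1})=\hbox{Id}$ and $Q(\xi^{-1},\xi)=\hbox{Id}$, and then the substitutions $\eta\mapsto\eta^{-1}$, $\xi\mapsto\xi^{-1}$ give $P(\xi^{-1},\xi)=Q(\xi,\xi^{-1})=\hbox{Id}$ as well. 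Inserting these identities back into the right-hand sides of (\ref{equip}) turns them into (\ref{eq3}) and (\ref{eq4}), so Propositions~\ref{leftinverse} and~\ref{rightinverse} deliver the left and right inverse property of $F(P,Q)$.

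I do not anticipate a genuine obstacle; the argument is routine bookkeeping. The only point meriting care is checking that one can freely swap $\xi\leftrightarrow\xi^{-1}$ in the above derivations, which is precisely why the inverse property of $L$ (rather than only one of its halves) is needed in the hypothesis; the necessity of this hypothesis is already recorded in Propositions~\ref{leftinverse} and~\ref{rightinverse}.
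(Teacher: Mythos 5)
Your argument is correct and follows exactly the route the paper intends (the paper omits the proof as routine): combine Propositions~\ref{leftinverse} and~\ref{rightinverse}, then use the strongly linear identities together with the substitutions $\eta=\epsilon$ in (\ref{eq3}) and $\xi=\epsilon$ in (\ref{eq4}) to show $P(\xi^{-1},\xi)=Q(\xi^{-1},\xi)=\hbox{Id}$, which collapses (\ref{eq3}) and (\ref{eq4}) into (\ref{equip}). No gaps.
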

Now, we build strongly linear abelian extensions $F(P,Q)$ satisfying the inverse property. 
\begin{construction}\label{conip}  Assume that the loop cocycle $(P, Q)$ satisfies $P(\xi,\eta):= \hbox{Id}$, $Q(\xi,\eta):= \hbox{Id}$ for any $\xi,\eta\in \Sigma = \{(\xi,\epsilon); \;\xi\in L\}\cup\{(\epsilon,\xi); \;\xi\in L\}\cup\{(\xi^{-1},\xi); \;\xi\in L\}$. 
	The permutations $\varphi:(\xi,\eta)\mapsto (\xi^{-1},\xi\eta)$ and $\psi:(\xi,\eta)\mapsto (\xi\eta,\eta^{-1})$ acting on the set $(L \times L)\setminus\Sigma$ interchange the pairs of different elements of the orbits of the group $\Gamma_\varphi$ generated by $\varphi$, respectively of the group $\Gamma_\psi$ generated by $\psi$. Let $\Gamma$ be the group generated by $\varphi:(\xi,\eta)\mapsto (\xi^{-1},\xi\eta)$ and $\psi:(\xi,\eta)\mapsto (\xi\eta,\eta^{-1})$. The orbit of the group $\Gamma$ consists of the elements 
		\begin{equation} \label{6orbit}(\xi,\eta),\, (\xi^{-1},\xi\eta), \, (\xi\eta,\eta^{-1}), \,(\eta^{-1},\xi^{-1}),\, ((\xi\eta)^{-1},\xi), \, (\eta, (\xi\eta)^{-1}).	\end{equation} 
	If $(\xi,\eta)\in (L \times L)\setminus\Sigma$, then the permutations  $\varphi:(\xi,\eta)\mapsto (\xi^{-1},\xi\eta)$, \,$\psi:(\xi,\eta)\mapsto (\xi\eta,\eta^{-1})$ and $\theta =: \varphi\cdot\psi\cdot\varphi:(\xi,\eta)\mapsto (\eta^{-1},\xi^{-1})$ are involutions, which are equal if and only if $\xi = \eta$ and $\xi^3 = \epsilon$, otherwise they are pairwise different. The even permutations $\varphi\cdot\psi:(\xi,\eta)\mapsto ((\xi\eta)^{-1},\xi)$ and $\psi\cdot\varphi: (\xi,\eta)\mapsto (\eta, (\xi\eta)^{-1})$ coincide if and only if $\xi = \eta$ and 
	$\xi^3 = \epsilon$, in this case $\varphi\cdot\psi = \psi\cdot\varphi$ is the identity permutation. \\
	In the following we assume that the loop $L$ does not contain elements of order $3$. It follows that the group 
	$\Gamma$ is isomorphic to the permutation group $S_3$ and acts simply transitively on its orbits in 
	$(L \times L)\setminus\Sigma$.\\
	Define the action of $\varphi, \psi\in\Gamma$ on $\hbox{Aut}(A)\times\hbox{Aut}(A)$ by 
	\[\varphi(\mathcal{P,Q}) = (\mathcal{Q}^{-1}\mathcal{P},\mathcal{Q}^{-1}),\; \psi(\mathcal{P,Q}) = (\mathcal{P}^{-1},\mathcal{P}^{-1}\mathcal{Q}),\quad \mathcal{P}, \mathcal{Q}\in\hbox{Aut}(A).\]
	The actions of $\varphi, \psi\in\Gamma$ are involutive and of $(\varphi\cdot\psi)^3$ is the identity map on $\hbox{Aut}(A)\times\hbox{Aut}(A)$. Hence we obtain an action of the group $\Gamma$ on $\hbox{Aut}(A)\times\hbox{Aut}(A)$ as follows:
	\begin{equation}\label{list}\begin{array}{|c |l |l |}
	\hline
	\iota & (\xi,\eta)\mapsto (\xi,\eta)& (\mathcal{P},\mathcal{Q})\mapsto (\mathcal{P},\mathcal{Q}) \\
	\hline
	\varphi & (\xi,\eta)\mapsto (\xi^{-1},\xi\eta)& (\mathcal{P},\mathcal{Q})\mapsto (\mathcal{Q}^{-1}\mathcal{P},\mathcal{Q}^{-1})  \\ 
	\hline
	\psi & (\xi,\eta)\mapsto (\xi\eta,\eta^{-1}) & (\mathcal{P},\mathcal{Q})\mapsto (\mathcal{P}^{-1},\mathcal{P}^{-1}\mathcal{Q}) \\ 
	\hline
	\varphi\cdot\psi\cdot\varphi & (\xi,\eta)\mapsto (\eta^{-1},\xi^{-1})& (\mathcal{P},\mathcal{Q})\mapsto (\mathcal{Q},\mathcal{P})  \\
	\hline
	\psi\cdot\varphi & (\xi,\eta)\mapsto (\eta, (\xi\eta)^{-1}) & (\mathcal{P},\mathcal{Q})\mapsto (\mathcal{P}^{-1}\mathcal{Q},\mathcal{P}^{-1}) \\
	\hline
	\varphi\cdot\psi & (\xi,\eta)\mapsto ((\xi\eta)^{-1},\xi)& (\mathcal{P},\mathcal{Q})\mapsto (\mathcal{Q}^{-1},\mathcal{Q}^{-1}\mathcal{P})  \\
	\hline
	\end{array}\ .\end{equation}
Then the necessary and sufficient condition (\ref{equip}) of the inverse property of the loop $F(P,Q)$ yields the following 
\begin{lemma}
A strongly linear abelian extension $F(P,Q)$ has the inverse property if and only if the action of the group $\Gamma$ on $L\times L$ and $\hbox{Aut}(A)\times\hbox{Aut}(A)$ is equivariant, which means 
	\[\tau Q(\xi,\eta) = Q(\tau(\xi,\eta)),\quad \tau P(\xi,\eta) = P(\tau(\xi,\eta))\quad\hbox{for each}\quad\tau\in\Gamma.\]
\end{lemma}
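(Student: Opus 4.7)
The plan is to derive this lemma as a direct translation of the preceding proposition into the language of the $\Gamma$-action just constructed, rather than as an independent computation. Concretely, the preceding proposition already establishes that the inverse property of $F(P,Q)$ is equivalent to the inverse property of $L$ together with the four identities (\ref{equip}), so all that remains is to show that (\ref{equip}) is the same thing as equivariance.

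First I would unpack what the equivariance requirement says for the two generators. Interpreting it as the componentwise identity
\[\tau\cdot(P(\xi,\eta),Q(\xi,\eta)) = (P(\tau(\xi,\eta)),Q(\tau(\xi,\eta))),\]
the formulas from the table (\ref{list}) show that equivariance for $\psi$ is exactly the pair $P(\xi\eta,\eta^{-1})=P(\xi,\eta)^{-1}$ and $Q(\xi\eta,\eta^{-1})=P(\xi,\eta)^{-1}Q(\xi,\eta)$, i.e.\ the first line of (\ref{equip}); analogously, equivariance for $\varphi$ unpacks to $Q(\xi^{-1},\xi\eta)=Q(\xi,\eta)^{-1}$ and $P(\xi^{-1},\xi\eta)=Q(\xi,\eta)^{-1}P(\xi,\eta)$, which is the second line of (\ref{equip}). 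Hence (\ref{equip}) coincides with the equivariance requirement restricted to the generating set $\{\varphi,\psi\}$ of $\Gamma$.

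Second, I would promote equivariance on the generators to equivariance for the whole group. Because the actions of $\varphi$ and $\psi$ satisfy the $S_3$-relations $\varphi^2=\psi^2=\hbox{Id}$ and $(\varphi\psi)^3=\hbox{Id}$ on both $L\times L$ and $\hbox{Aut}(A)\times\hbox{Aut}(A)$---a fact recorded in the paragraph immediately before the lemma---iterated composition of equivariant maps is again equivariant, and so the two generator identities automatically yield $\tau(P,Q)(\xi,\eta)=(P,Q)(\tau(\xi,\eta))$ for every $\tau\in\Gamma$. Combining this with the preceding proposition closes the biconditional.

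I do not foresee a substantive obstacle: the proof is essentially bookkeeping, matching the four equations of (\ref{equip}) with the two generator rows of (\ref{list}). The only delicate point is the verification that the generator formulas on $\hbox{Aut}(A)\times\hbox{Aut}(A)$ do respect the $S_3$-relations, but this has already been asserted in the excerpt. Note that the standing assumption that $L$ contains no element of order three, which is used later to guarantee that $\Gamma$ acts simply transitively on its orbits in $(L\times L)\setminus\Sigma$, plays no role in the present biconditional and may therefore be suppressed in the proof of the lemma itself.
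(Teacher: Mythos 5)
Your proposal is correct and matches the paper's intent exactly: the paper states the lemma as an immediate consequence of the preceding proposition together with the table (\ref{list}), and your bookkeeping—identifying the $\psi$-row with the first line and the $\varphi$-row with the second line of (\ref{equip}), then extending from the generators to all of $\Gamma$ by composition—is precisely the omitted verification. Your closing remarks (that the $S_3$-relations on $\hbox{Aut}(A)\times\hbox{Aut}(A)$ are already asserted, and that the no-order-$3$ hypothesis is not needed for this biconditional) are also accurate.
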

	We finish now our construction: we choose the values $Q(\xi,\eta)$, respectively $P(\xi,\eta)$, arbitrarily at one of the elements of the orbits of $\Gamma$ in $(L \times L)\setminus\Sigma$  and define the value $Q(\tau(\xi,\eta))$, respectively $P(\tau(\xi,\eta))$, at other elements  $\tau(\xi,\eta)\in (L \times L)\setminus\Sigma$, $\tau\in \Gamma$, corresponding to the  commuting diagrams
	\[\begin{array}{ccc}
	{L\times L} & \xrightarrow{\;\;\mathlarger{\tau}\;\;} & L\times L \\
	\bigg\downarrow{(P,Q)} &  & \bigg\downarrow{(P,Q)} \\
	{\hbox{Aut}(A)} & \xrightarrow{\;\;\mathlarger{\tau}\;\;} & \hbox{Aut}(A)\\
	\end{array}.\] 
\end{construction}
\begin{corollary} \label{slaip} Assume that the loop $L$ does not contain elements of order $3$. The strongly linear abelian extensions $F(P,Q)$ determined by Construction \ref{conip} satisfy the inverse property. Conversely, any strongly linear abelian extension $F(P,Q)$ of $A$ by $L$ having the inverse property can be obtained by Construction \ref{conip}.
\end{corollary}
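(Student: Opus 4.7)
The plan is to deduce both directions as a direct consequence of the preceding Lemma, which already characterizes the inverse property of $F(P,Q)$ as $\Gamma$-equivariance of $(P,Q)$, combined with the simply-transitive action of $\Gamma\cong S_3$ on orbits in $(L\times L)\setminus\Sigma$.

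First I would unpack that the four identities (\ref{equip}) are precisely the equivariance conditions under the generators $\varphi$ and $\psi$ recorded in the table (\ref{list}): writing out $\varphi\circ(P,Q)=(P,Q)\circ\varphi$ reproduces the first two equations, and the analogous calculation for $\psi$ gives the last two. Since $\varphi,\psi$ generate $\Gamma$, (\ref{equip}) is equivalent to $\Gamma$-equivariance. I would also verify that the formulas in (\ref{list}) really define a $\Gamma$-action on $\hbox{Aut}(A)\times\hbox{Aut}(A)$, i.e.\ that $\varphi^2=\psi^2=\iota$ and $(\varphi\psi)^3=\iota$ hold on pairs of automorphisms; this is a short calculation from the stated formulas.

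For the forward direction I would check that the $(P,Q)$ produced by Construction \ref{conip} is $\Gamma$-equivariant. On $\Sigma$ one has $P=Q=\hbox{Id}$, and $\Sigma$ is stable under both generators (quick case check: $\varphi(\xi,\epsilon)=(\xi^{-1},\xi)$, $\varphi(\epsilon,\xi)=(\epsilon,\xi)$, $\varphi(\xi^{-1},\xi)=(\xi,\epsilon)$, and similarly for $\psi$), so equivariance on $\Sigma$ is automatic since $(\hbox{Id},\hbox{Id})$ is a fixed point of every $\tau\in\Gamma$. On the complement, the construction extends any chosen values at orbit representatives to the whole orbit by the $\Gamma$-action, making $(P,Q)$ equivariant by design. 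The preceding Lemma then delivers the inverse property.

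For the converse, suppose $F(P,Q)$ is a strongly linear abelian extension with the inverse property. On $\Sigma$ the strong linearity gives $P(\xi,\epsilon)=P(\epsilon,\xi)=Q(\xi,\epsilon)=Q(\epsilon,\xi)=\hbox{Id}$, and specializing (\ref{equip}) at $\eta=\epsilon$ yields $P(\xi^{-1},\xi)=Q(\xi^{-1},\xi)=\hbox{Id}$ as well, so $(P,Q)$ matches the construction on $\Sigma$. On $(L\times L)\setminus\Sigma$, (\ref{equip}) is $\Gamma$-equivariance, and the simply-transitive action of $\Gamma\cong S_3$ on orbits (guaranteed by the no-order-$3$ hypothesis) means the value of $(P,Q)$ on an orbit is determined by its value at a single representative. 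Hence $F(P,Q)$ arises from Construction \ref{conip} by taking that representative's value as the free choice. The main technical obstacle will be verifying that the $\Gamma$-action on $\hbox{Aut}(A)\times\hbox{Aut}(A)$ is well defined and that the induced $\Gamma$-action on $(L\times L)\setminus\Sigma$ really is simply transitive; both reduce to the short computations already prepared in the construction, and the remainder of the proof is bookkeeping that matches equations to rows of (\ref{list}).
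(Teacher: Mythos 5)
Your proposal is correct and follows essentially the same route as the paper: the corollary is read off from the Lemma characterizing the inverse property as $\Gamma$-equivariance of $(P,Q)$ together with the simply transitive action of $\Gamma\cong S_3$ on the orbits in $(L\times L)\setminus\Sigma$, with the values on $\Sigma$ forced to $\hbox{Id}$ (in the converse direction via the $\eta=\epsilon$ specialization of (\ref{equip}), exactly as you note). The only slip is cosmetic: equivariance under $\varphi$ yields the \emph{last} two identities of (\ref{equip}) and $\psi$ the \emph{first} two, not the other way around.
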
 
\begin{remark} If $L$ is an ordered loop, then we call a pair in $L\times L$ to be positive, if both of its elements are positive. Clearly, in the list (\ref{6orbit}) of elements of an orbit there is precisely one positive pair. The loop cocycle $(P,Q)$ can be determined by choosing the value for $Q(\xi,\eta)$, respectively $P(\xi,\eta)$ freely for positive $(\xi,\eta)$, and defining $P(\xi\eta,\eta^{-1})$ and $Q(\xi\eta,\eta^{-1})$ by the equivariant action of $\Gamma$ on $L\times L$ and $\hbox{Aut}(A)\times\hbox{Aut}(A)$. \end{remark}

\section{Existence of strongly linear abelian extensions with inverse property} 

Let $L$ be a finite loop with cardinality $|L| = l$ without elements of order $3$. If there is a strongly linear abelian extension of an abelian group $A$ by $L$, then it follows from Construction \ref{conip} and Corollary \ref{slaip} that the cardinality of $(L \times L)\setminus\Sigma$ is $l^2 - 3l + 2$ and the action of $\Gamma$ on $(L \times L)\setminus\Sigma$ gives a partition of $\Gamma$ on $(L \times L)\setminus\Sigma$ on the orbits with $6$ elements. Hence $l^2 - 3l + 2$ is divisible by $6$, i.e. there is a natural number $k \ge 1$ such that $l^2 - 3l + 2 = 6k$. Solving the equation we get the expression for $|L| = l$:
\begin{equation}\label{number}l = \frac12(3 + \sqrt{1 + 24k}) = \frac12(3 + h),\quad\hbox{with some}\; k\in\mathbb N,\;\; h^2 = 1 + 24k.\end{equation}
If for a loop $L$ with inverse property there exists a strongly linear abelian extension of an abelian group by $L$, then the cardinality $l = |L|$ satisfies the relation (\ref{number}). Moreover $24k = h^2 - 1 = (h - 1)(h + 1)$ is divisible by $24$ and $h$ is not divisible by $2$ and $3$. 
Conversely, if the cardinality $l$ of a loop $L$ with inverse property satisfies (\ref{number}) then Construction \ref{conip} gives strongly linear abelian extensions of abelian groups by the loop $L$ with $|L| = l$ elements. We obtain
\begin{theorem} Let $L$ be a finite loop, without elements of order $3$, satisfying the inverse property. There exists a loop cocycle on $L\times L$ such that the determined  strongly linear abelian extension has the inverse property if and only if $|L| = l$ satisfies the condition (\ref{number}).
\end{theorem}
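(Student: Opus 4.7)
The plan is to extract the theorem directly from Corollary~\ref{slaip} and Construction~\ref{conip} together with a cardinality count of $(L\times L)\setminus\Sigma$. First I would verify that $\Sigma$ is $\Gamma$-invariant, so that $\Gamma$ acts on the complement: a direct check shows that $\varphi$ swaps $\{(\xi,\epsilon);\,\xi\in L\}$ with $\{(\xi^{-1},\xi);\,\xi\in L\}$ and fixes $\{(\epsilon,\xi);\,\xi\in L\}$, while $\psi$ fixes $\{(\xi,\epsilon);\,\xi\in L\}$ and swaps the other two. An inclusion--exclusion argument on $\Sigma$ then yields $|\Sigma|=3l-3+1=3l-2$, because every pairwise and the triple intersection collapse to $\{(\epsilon,\epsilon)\}$; consequently $|(L\times L)\setminus\Sigma|=l^2-3l+2$.

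For the necessity direction, suppose a loop cocycle $(P,Q)$ exists so that the strongly linear abelian extension $F(P,Q)$ has the inverse property. By Corollary~\ref{slaip} it arises from Construction~\ref{conip}, hence its restriction to $(L\times L)\setminus\Sigma$ is $\Gamma$-equivariant. The analysis already performed inside Construction~\ref{conip} shows that, under the hypothesis that $L$ has no elements of order $3$, the six elements listed in (\ref{6orbit}) are pairwise distinct for every $(\xi,\eta)\in(L\times L)\setminus\Sigma$, so $\Gamma\cong S_3$ acts with trivial stabilisers and each orbit has exactly $6$ elements. Partitioning $(L\times L)\setminus\Sigma$ into $\Gamma$-orbits forces $6\mid l^2-3l+2$; writing $l^2-3l+2=6k$ and solving the quadratic gives $l=\tfrac12(3+h)$ with $h^2=1+24k$, which is exactly~(\ref{number}), and the congruence $h^2\equiv 1\pmod{24}$ forces $\gcd(h,6)=1$.

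For the converse, assuming $|L|=l$ satisfies (\ref{number}), the quantity $l^2-3l+2$ is a nonnegative multiple of $6$, so Construction~\ref{conip} applies without obstruction: pick a transversal of the $\Gamma$-orbits in $(L\times L)\setminus\Sigma$, assign the pair $(P,Q)$ on it freely (for instance set all values to $\hbox{Id}$), and extend equivariantly; by Corollary~\ref{slaip} the resulting $F(P,Q)$ has the inverse property. The only real technical step is the orbit-size count, and it is essentially already inside Construction~\ref{conip}: confirming that the degeneracy $\varphi=\psi=\theta$ (equivalently, $\varphi\psi=\psi\varphi=\iota$) occurs precisely when $\xi=\eta$ and $\xi^3=\epsilon$, and hence is ruled out by the hypothesis that $L$ contains no elements of order $3$.
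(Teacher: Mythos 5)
Your proposal is correct and follows essentially the same route as the paper: compute $|(L\times L)\setminus\Sigma|=l^2-3l+2$, use the free $S_3$-action (orbits of size $6$, guaranteed by the absence of elements of order $3$) to force $6\mid l^2-3l+2$, solve the quadratic for $l$, and invoke Construction \ref{conip} with Corollary \ref{slaip} for the converse. The only difference is that you spell out the $\Gamma$-invariance of $\Sigma$ and the inclusion--exclusion count $|\Sigma|=3l-2$, which the paper leaves implicit.
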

In the following list we give all triples $(k,h,l)$ up to $l = 16$ satisfying the condition (\ref{number}) : 

\[\begin{array}{c l l l l l l l l l l l}
k: & 0, & 1, & 2, & 5, &  7,  &  12, &  15, &  22, &  26, &  35, &\dots\\
h: & 1, & 5, & 7, & 11, & 13, &  17, &  19, &  23, &  25, &  29, &\dots\\
l: & 2, & 4, & 5, & 7,  &  8, &  10, &  11, &  13, &  14, &  16, &\dots\\ 
\end{array}\]

\bigskip
\noindent
Author's addresses: \\
\'Agota Figula, Institute of Mathematics, University of Debrecen, H-4002 Debrecen, P.O.Box 400, Hungary. {\it E-mail}: {\tt {}figula@science.unideb.hu} \\[1ex]
P\'eter T. Nagy, Institute of Applied Mathematics, \'Obuda University, 1034 Budapest, B\'ecsi \'ut 96/B, Hungary. {\it E-mail}: {\tt {}nagy.peter@nik.uni-obuda.hu}

\end{document}